\newtheorem{theorem}{Theorem}[section]
\newtheorem{lemma}[theorem]{Lemma}
\newtheorem{proposition}[theorem]{Proposition}
\newtheorem{corollary}[theorem]{Corollary}
\newcommand{\ben}{\begin{enumerate}}
	\newcommand{\een}{\end{enumerate}}
\newcommand{\bt}{\begin{theorem}}
	\newcommand{\et}{\end{theorem}}
\newcommand{\bl}{\begin{lemma}}
	\newcommand{\el}{\end{lemma}}
\newcommand{\bc}{\begin{corollary}}
	\newcommand{\ec}{\end{corollary}}
\newcommand{\bp}{\begin{proposition}}
	\newcommand{\ep}{\end{proposition}}
\newcommand{\br}{\begin{remark}}
	\newcommand{\er}{\end{remark}}
\newcommand{\am}[1]{{\color{black} #1}}
\newcommand{\jm}[1]{{\color{black} #1}}
\newtheorem{remark}{Remark}
\newtheorem{example}{Example}
\providecommand{\keywords}[1]
{
	\small	
	{\noindent\textit{Keywords: }} #1
}
\providecommand{\ams}[1]
{
	\small	
	{\noindent\textit{2020 Mathematics Subject Classification: }} #1
}
\title{How many digits are needed?}
\author{I.~W.~Herbst, J. M\o ller, A.~M.~Svane}
\date{}
\begin{document}

\maketitle

\begin{abstract}
Let $X_1,X_2,...$ be the digits in the base-$q$ expansion of a random variable $X$ defined on $[0,1)$ where $q\ge2$ is an integer. For $n=1,2,...$, we study the probability distribution $P_n$ of the (scaled) remainder $T^n(X)=\sum_{k=n+1}^\infty X_k q^{n-k}$: If $X$ has an absolutely continuous CDF then $P_n$ converges in the total variation metric to the Lebesgue measure $\mu$ on the unit interval. Under weak smoothness conditions we establish first a coupling between $X$ and a non-negative integer valued random variable $N$ so that $T^N(X)$ follows $\mu$ and is independent of $(X_1,...,X_N)$, 
and second exponentially fast convergence of $P_n$ and its PDF $f_n$. We discuss how many digits are needed and show examples of our results. The convergence results are extended to the case of a multivariate random variable defined on a unit cube.
\end{abstract}

\keywords{asymptotic distribution; coupling; exponential convergence rate; extended New\-comb-Benford law;
	 multivariate digit expansion; remainder of a digit expansion;  total variation distance; uniform distribution}

\ams{60F25; 62E17; 37A50}

\section{Introduction}
\label{s:intro}

Let $X$ be a random variable so that $0\le X<1$, and for $x\in\mathbb R$, let $F(x)=\mathrm P(X\le x)$ be the cumulative distribution function (CDF) of $X$.
For a given integer $q\ge2$, we consider the base-$q$ transformation $T:[0,1)\mapsto[0,1)$ given by 
\begin{equation}\label{e:Tq}T(x)=xq-\lfloor xq\rfloor
\end{equation}
where $\lfloor\cdot\rfloor$ is the floor function (so $\lfloor xq\rfloor$ is the integer part of $xq$). For $n=1,2,...$, let $T^n=T\circ\cdots\circ T$ denote the composition of $T$ with itself $n$ times and define 
\begin{equation}X_n=\lfloor T^{n-1}(X)q\rfloor
\end{equation}
where $T^{0}(X)=X$. Then 
\begin{equation}\label{e:q}
X=\sum_{n=1}^\infty X_nq^{-n}
\end{equation}
is the base-$q$ expansion of $X$ with digits $X_1,X_2,...$. Note that $X$ is in a one-to-one correspondence to 
the first $n$ digits $(X_1,...,X_n)$ together with $T^{n}(X)=\sum_{k=n+1}^\infty X_kq^{n-k}$, which is the remainder multiplied by $q^n$.
Let $\mu$ denote Lebesgue measure on $[0,1)$, $P_n$ the probability distribution of $T^n(X)$ and $F_n$ its CDF, so $X$ follows $P_0$ and has CDF $F_0=F$.
The following facts are well-known (see \cite{part1}): 
\begin{enumerate}
	\item[(a)] $P_0=P_1$ (i.e., invariance in distribution under $T$) is equivalent to stationarity of the process $X_1,X_2,...$.
	\item[(b)]  $P_0=P_1$ and $F$ is absolutely continuous if and only if $P_0=\mu$.
	\item[(c)] $P_0=\mu$ if and only if $X_1,X_2,...$ are independent and uniformly distributed on $\{0,1,...,q-1\}$.
\end{enumerate}
Items (a)--(c) 
together with the fact that $T$ is ergodic with respect to $\mu$ are used in metric number theory 
(see \cite{Karma}, \cite{Fritz}, and the references therein)
to establish properties such as 
`for Lebesgue almost all numbers between 0 and 1, the relative frequency of any finite combination of digits of a given length $n$ and which occurs among the first $m > n$ digits converges to $q^{-n}$ as $m\rightarrow\infty$'  
(which is basically the definition of a normal number in base-$q$, cf.\ \cite{Borel}).
To the best of our knowledge, less (or perhaps no) attention has been paid to the asymptotic behaviour of the (scaled) remainder $T^n(X)$ as $n\rightarrow\infty$. 
This paper fills this gap. 


Assuming $F$ is absolutely continuous with a probability density function $f$ we establish the following.
We start in Section~\ref{s:pre} to consider a special case of $f$ where $T^n(X)$ follows exactly $\mu$ when $n$ is sufficiently large. Then 
in Section~\ref{s:couplings}, under a weak assumption on $f$, we specify an interesting coupling construction involving a non-negative integer-valued random variable $N$ so that $T^N(X)$ follows exactly $\mu$ and is independent of $(X_1,...,X_N)$. 
 Moreover, 
in Section~\ref{s:results-q}, 
we show that $\lim_{n\rightarrow\infty}d_{\mathrm{TV}}(P_n,\mu)=0$ where $d_{\mathrm{TV}}$ is the total variation metric (as given later in \eqref{e:TV2}).
Because of these results, if in an experiment a realization of $X$
is observed and the first $n$ digits are kept, and if (so far) the only model assumption
is absolute continuity of $F$, then the remainder rescaled by $q^n$ is at least approximately
uniformly distributed when $n$ is large. Since we interpret the uniform distribution
as the case of complete randomness, no essential information about the distribution is lost. 
On the other hand, if the distribution of the remainder is far from uniform, this may indicate that the distribution one is trying to find has finer structure that one is missing by looking only at the first $n$ digits.  We return to this issue in Section~\ref{s:examples} when discussing sufficiency and ancillarity.
Furthermore, in Section~\ref{s:results-q} we study the convergence rate of $d_{\mathrm{TV}}(P_n,\mu)$ and other related properties.  In Section~\ref{s:examples}, we
	illustrate our results from Sections~\ref{s:couplings} and \ref{s:results-q} in connection to various specific choices of $F$, including the case where $F$ follows the extended Newcomb-Benford law (Example~\ref{ex:1}).
Finally, in Section~\ref{s:multi}, we generalize our convergence results to the situation 
where $X$ is extended to a multivariate random variable with values in the $k$-dimensional unit cube $[0,1)^k$ and each of the $k$ coordinates of $X$ is transformed by $T$. 

We plan in a future paper to study the asymptotic behaviour of the remainder in other expansions, including
a certain base-$\beta$ expansion of a random variable, namely when $q$ is replaced by $\beta=(1+\sqrt 5)/2$ (the golden ratio) in all places above.

\section{Preliminaries}\label{s:pre}

Let again the situation be as in \eqref{e:Tq}--\eqref{e:q}. The following lemma is true in general (i.e., without assuming $F$ is absolutely continuous). As in \cite{part1}, we define  a  base-$q$ fraction  in $[0,1)$ to be a number of the form $\sum_{k=1}^n j_kq^{-k}$ with $(j_1,...,j_n)\in\mathbb \{0,1,...,q-1\}^n$ and  $n\in\mathbb N$. 

\bl\label{l:1} If $F$ has no jump at any base-$q$ fraction in $[0,1)$ then for every $x\in[0,1]$,
	\begin{equation}\label{e:Fn}
	F_n(x) = 
	\sum_{j=0}^{q^n - 1} F(q^{-n}(j+x)) - F(q^{-n}j).
	\end{equation}
\el

\begin{proof}
	Clearly, \eqref{e:Fn} holds for $x=1$, so 
	let $0\le x<1$. For $j_1,...,j_n\in\{0,1,...,q-1\}$ and $j=\sum_{i=1}^n j_iq^{n-i}$, the event that $X_1=j_1, ..., X_n=j_n$, and $T^n(X)\le x$ is the same as the event that $q^{-n}j\le X<q^{-n}(j+1)$ and $X\le q^{-n}(j+x)$. Hence, since $0\le x<1$,
	\[F_n(x)=\sum_{j=0}^{q^n-1}\mathrm P(q^{-n}j\le X\le q^{-n}(j+x))\]
	whereby \eqref{e:Fn} follows since $F(x)$ has no jumps at the base-$q$ fractions. 
\end{proof}

The property that $F$ has no jump at any base-$q$ fraction 
is of course satisfied when $F$ is continuous. 

For the remainder of this 
section and the following Sections~\ref{s:couplings}--\ref{s:examples}
we
assume that $X$ has a probability density function (PDF) $f$ concentrated on $(0,1)$, meaning that $F$ is absolutely continuous with $F(x)=\int_{-\infty}^x f(t)\,\mathrm dt$ for all $x\in\mathbb R$.  
Then, by 
\eqref{e:Fn}, $F_n$ is absolutely continuous with PDF
\begin{equation}\label{e:5555}
f_n(x)=q^{-n}\sum_{j=0}^{q^n - 1} f(q^{-n}(j+x))
\end{equation}
for $0< x< 1$.

In the following special case of $f$, 
convergence of 
$P_n$
is obtained within a finite number of steps.

\bp\label{p:1}
Let $m\ge1$ be
an integer. Then
$P_m=\mu$ (and hence $P_n=\mu$ for $n=m,m+1,...$) if and only if for all $k\in\{0,1,...,q^m-1\}$ and Lebesgue almost every $u\in[0,1)$, 
\begin{equation}\label{e:fkuqm}
f((k+u)q^{-m})=q^m\mathrm P\left(\sum_{i=1}^m X_iq^{m-i}=k\,\bigg|\,T^m(X)=u\right).
\end{equation}
In particular, if
$f$ is constant Lebesgue almost everywhere on each of the intervals $[jq^{-m},(j+1)q^{-m})$, $j=0,1,...,q^{m}-1$,  
then for $n=m,m+1,...$, $P_n=\mu$  and $(X_1,...,X_n)$ is independent of $T^n(X)$.
\ep

\begin{proof}
If $P_m=\mu$ then by invariance of $\mu$ under $T$, $P_n=\mu$ for $n=m,m+1,...$. 
Let $K=\sum_{i=1}^mX_iq^{m-i}$ and $U=T^m(X)$, so $X=(K+U)q^{-m}$.
For Lebesgue almost every $t\in[0,1)$, 
\[f(t)=q^m\mathrm P(K=\lfloor q^m t\rfloor\,|\,U=q^mt-\lfloor q^mt\rfloor)f_m(q^mt-\lfloor q^mt\rfloor)\]
since
\begin{align*}
&F(t)=\mathrm P((K+U)q^{-m}\le t)\\
&= F(q^{-m}\lfloor q^mt \rfloor) + \int_0^{q^m t-\lfloor q^mt\rfloor}\mathrm P(K = \lfloor q^mt\rfloor\,|\,U=u)f_m(u)\,\mathrm du.
\end{align*}
Thereby the first assertion follows.  

Suppose that $c_j$ is a constant and $f=c_j$  Lebesgue almost everywhere on  $[jq^{-m},(j+1)q^{-m})$ for $j=0,1,...,q^{m}-1$. Then 
$$\sum_{j=0}^{q^m-1}c_jq^{-m}=\sum_{j=0}^{q^m-1}\int_{jq^{-m}}^{(j+1)q^{-m}}c_j=\int_0^1 f=1,$$ 
and so for Lebesgue almost all $x\in[0,1)$, \eqref{e:5555} gives that $f_m(x)=1$. Therefore, $P_m=\mu$, and hence $P_n=\mu$ for $n=m,m+1,...$. Consequently, the last assertion follows from
\eqref{e:fkuqm}, using that $\sum_{i=1}^m X_iq^{m-i}$ and $(X_1,...,X_m)$ are in a one-to-one correspondence.
\end{proof}

\section{Couplings}\label{s:couplings}


Let $f$ be a PDF on $[0,1)$. We introduce the following notation. Let $I_\emptyset=I_{1;0}=[0,1)$ and $c_\emptyset=c_{1;0}=\inf_{I_\emptyset}f$. For $n=1,2,...$ and $x_1,x_2,...\in\{0,1,...,q-1\}$,  let $k=1+\sum_{i=1}^n x_iq^{n-i}$ and
	$$I_{x_1,...,x_n}=I_{k;n}=[(k-1)q^{-n},k q^{-n})$$ 
	and
	$$c_{x_1,...,x_n}={c_{k;n}=}\inf_{I_{x_1,...,x_n}} f-\inf_{I_{x_1,...,x_{n-1}}} f.$$ 

Recall that a function $f$ is lower semi-continuous at a point $x$ if for any sequence $y_n\to x$, it holds that  $\liminf_n {f(y_n)} \geq f(x)$.
Note that if $x=\sum_{n=1}^\infty x_nq^{-n}\in[0,1)$ is not a base-$q$ fraction, then lower semi-continuity at $x$ is equivalent to 
\begin{equation}\label{e:condition}
f(x)=\lim_{n\rightarrow\infty}\inf_{y\in I_{x_1,...,x_n}} f(y).
\end{equation}
Write $U\sim\mu$ if $U$ is a uniformly distributed random variable on $[0,1)$. 

\begin{theorem}\label{t:coupling} Suppose $f$ is  lower semi-continuous at Lebesgue almost all points in $[0,1)$. Then
there is a coupling between  
$X \sim f$ and a  non-negative integer-valued random variable $N$ 
such that 
$T^N(X)\sim\mu$ is independent of $(X_1,...,X_N)$.
\end{theorem}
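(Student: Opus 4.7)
The plan is to peel off piecewise-constant layers from $f$, one for each level $n$, and use the last assertion of Proposition \ref{p:1} inside each layer. For $x\in[0,1)$ with digit expansion $(x_1,x_2,\ldots)$, define $g_n(x)=\inf_{y\in I_{x_1,\ldots,x_n}}f(y)$ for $n\ge1$ and $g_0\equiv c_\emptyset$. Each $g_n$ is piecewise constant on the level-$n$ intervals, satisfies $0\le g_n\le f$ pointwise, and is nondecreasing in $n$. Combining the lower semi-continuity hypothesis with the characterization \eqref{e:condition} and discarding the countable (hence null) set of base-$q$ fractions together with the null set where lower semi-continuity fails, one obtains $g_n(x)\uparrow f(x)$ for Lebesgue almost every $x\in[0,1)$. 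Setting $h_0=g_0$ and $h_n=g_n-g_{n-1}$ for $n\ge1$ yields nonnegative functions with $h_n\equiv c_{x_1,\ldots,x_n}$ on $I_{x_1,\ldots,x_n}$ and $\sum_{n=0}^\infty h_n=f$ almost everywhere.

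The coupling is then immediate. Set $p_n=\int_0^1 h_n(u)\,\mathrm du$; by monotone convergence $\sum_{n=0}^\infty p_n=\int_0^1 f(u)\,\mathrm du=1$, so I may define a non-negative integer-valued random variable $N$ with $\mathrm P(N=n)=p_n$ and, conditional on $N=n$ with $p_n>0$, let $X$ have density $h_n/p_n$ on $[0,1)$. The marginal density of $X$ is $\sum_n p_n(h_n/p_n)=f$, so $X\sim f$. Given $N=n\ge1$, the conditional density $h_n/p_n$ is constant almost everywhere on each $I_{k;n}$, so the last assertion of Proposition \ref{p:1} (with $m=n$) gives $T^n(X)\sim\mu$ independent of $(X_1,\ldots,X_n)$ under the conditional law; the case $N=0$ is trivial since $h_0$ is a constant. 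Averaging over $N$ yields
\begin{equation*}
\mathrm P\bigl(T^N(X)\in A,\,B\bigr)=\sum_{n=0}^\infty \mathrm P(N=n)\,\mu(A)\,\mathrm P(B\mid N=n)=\mu(A)\,\mathrm P(B)
\end{equation*}
for every measurable $A\subseteq[0,1)$ and every event $B$ determined by $N$ and the first $N$ digits, giving both $T^N(X)\sim\mu$ and the required independence from $(X_1,\ldots,X_N)$.

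The \emph{main obstacle} is the almost everywhere convergence $g_n(x)\uparrow f(x)$: the pointwise-defined infima must recover the pointwise values of $f$, and this is exactly what the lower semi-continuity hypothesis buys through \eqref{e:condition}. A secondary bookkeeping concern is that $(X_1,\ldots,X_N)$ is a tuple of random length, but since independence holds conditionally on each value of $N$, it transfers to unconditional independence without additional work, as in the display above.
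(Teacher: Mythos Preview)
Your proof is correct and follows essentially the same route as the paper: both arguments rest on the telescoping decomposition $f(x)=\sum_{n\ge0}c_{x_1,\ldots,x_n}$ obtained from \eqref{e:condition}, and then exploit that the $n$-th layer is piecewise constant on the level-$n$ intervals. The only cosmetic difference is the direction in which the coupling is specified---the paper defines $N$ conditionally on $X$ via $\mathrm P(N=n\mid X=x)=c_{x_1,\ldots,x_n}/f(x)$ and then invokes Bayes' theorem to recover the conditional density of $X$ given $N=n$, whereas you build the same joint law in the reverse order (first $N$ with $\mathrm P(N=n)=p_n$, then $X\mid N=n$ with density $h_n/p_n$) and cite Proposition~\ref{p:1} for the within-layer step that the paper argues directly.
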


\begin{remark} Set $\{0,1,...,q-1\}^0=\{\emptyset\}$ so we interpret $\emptyset$ as no digits. Then $(X_1,...,X_N)$ is a discrete random variable with state space $\cup_{n=0}^\infty\{0,1,...,q-1\}^n$.

	Commonly used PDFs are lower semi-continuous almost everywhere. For an example where this condition does not hold, let
$0<\epsilon_{k;n}<q^{-n}$ such that \am{$a=\sum_{n=0}^\infty\sum_{k=1}^{q^n}\epsilon_{k;n}<0$}. Further, let $J_{k;n}=[(k-1)q^{-n},(k-1)q^{-n}+\epsilon_{k;n}]$, $G=\cup_{n=0}^\infty\cup_{k=1}^{q^n}J_{k;n}$, and $H=[0,1)\setminus G$. Then $H$ is a Borel set with $0<\mu(H)\le1$, since $\mu(J_{k;n})=\epsilon_{k;n}$ and so $\mu(G)\le a<1$. Hence, the uniform distribution $\mu_H$ on $H$ is absolutely continuous. 
Since $H$ contains no base-$q$ fraction and the set of base-$q$ fractions is dense in $[0,1)$, any interval will contain points not in $H$. 
Now, any  PDF $f$ for $\mu_H$ will be zero outside $H\cup A$ for some nullset $A$ (depending on the version of $f$), so for all integers $n\ge0$ and $1\le k\le q^n$, $f$ will be zero on ${I_{k;n}}\setminus (H\cup A)\ne\emptyset$. Thus the right hand side in \eqref{e:condition} is zero, so $f$ is not lower semi-continuous anywhere.
\end{remark}

\begin{proof} 
For Lebesgue almost all $x=\sum_{n=1}^\infty x_nq^{-n}\in[0,1)$ with $x_n=\lfloor T^{n-1}(x)q\rfloor$, assuming $x$ is not a base-$q$ fraction (recalling that the set of base-$q$ fractions is a Lebesgue nullset), \eqref{e:condition} gives
	 \begin{align}
	 f(x)&=\inf_{I_\emptyset}f+\left(\inf_{I_{x_1}}f-\inf_{I_\emptyset}f\right)+\left(\inf_{I_{x_1,x_2}}f-\inf_{I_{x_1}}f\right)+...\nonumber\\
	 &=\sum_{n=0}^\infty c_{x_1,...,x_n} . 
	 \label{e:hurra}
	 \end{align}
   Let $N$ be a random variable such that for $f(x)>0$, conditionally on $X=x$, 
 \[\mathrm P(N=n\,|\,X=x)=c_{x_1,...,x_n}/f(x),\quad n=0,1,...\]
 By \eqref{e:hurra} and since $c_{x_1,...,x_n}\ge0$, this is a well-defined conditional distribution.

 By Bayes theorem, conditioned on $N=n$ with $\mathrm P(N=n)>0$, $X$  follows an absolutely continuous distribution with PDF 
 \[f(x\,|\,n)=c_{x_1,...,x_n}/\mathrm P(N=n).\]
Therefore, since $f(x|n)$ is constant on each of the intervals $I_{k;n}$, conditioned on $N=n$ we immediately see that $(X_1,...,X_n)$ (interpreted as nothing if $n=0$) and $T^n(X)$ are independent and that $T^n(X)\sim\mu$. The latter implies that $T^N(X)\sim \mu$ is independent of $N$. Consequently, if we do not condition on $N$, we have that $(X_1,...,X_N)$ and $T^N(X)\sim\mu$ are independent.
\end{proof}

\begin{corollary}\label{c:latest}
For the coupling construction in the proof of Theorem~\ref{t:coupling}, conditioned on $X=x$ with $f(x)>0$, we have
\begin{equation}\label{e:latest}
\mathrm P(N\le n\,|\,X=x)=\sum_{k=0}^n c_{x_1,...,x_k}/f(x),\quad n=0,1,...,
\end{equation}
where $x_k=\lfloor T^{k-1}(x)q\rfloor$ for $1\le k\le n$.
Moreover,
\begin{equation}\label{e:PNn}
\mathrm P(N\le n)=q^{-n}\sum_{k=1}^{q^n} \inf_{I_{k;n}}f,\quad n=0,1,...
\end{equation}
\end{corollary}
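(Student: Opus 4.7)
The plan is to argue the two identities in sequence, the first being immediate and the second reducing to a telescoping-sum computation.

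For \eqref{e:latest}, the identity is really just a restatement of the construction in the proof of Theorem~\ref{t:coupling}: we defined $\mathrm P(N=k\,|\,X=x)=c_{x_1,\ldots,x_k}/f(x)$ for each $k\ge0$ whenever $f(x)>0$, so summing from $k=0$ to $n$ directly yields the stated formula for the conditional CDF.

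For \eqref{e:PNn}, my first step is to condition on $X$ and substitute \eqref{e:latest}:
\[\mathrm P(N\le n)=\int_0^1\mathrm P(N\le n\,|\,X=x)f(x)\,\mathrm dx=\int_0^1 \sum_{k=0}^n c_{x_1,\ldots,x_k}\,\mathrm dx,\]
where for Lebesgue-almost every $x\in[0,1)$ the digits $x_1,x_2,\ldots$ are well-defined. The second step is to exploit the telescoping structure of the $c$'s: since $c_\emptyset=\inf_{I_\emptyset}f$ and $c_{x_1,\ldots,x_k}=\inf_{I_{x_1,\ldots,x_k}}f-\inf_{I_{x_1,\ldots,x_{k-1}}}f$ for $k\ge1$, the inner sum collapses to $\inf_{I_{x_1,\ldots,x_n}}f$. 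The final step is to observe that the function $x\mapsto\inf_{I_{x_1,\ldots,x_n}}f$ is piecewise constant on the partition $\{I_{k;n}\}_{k=1}^{q^n}$ of $[0,1)$, each piece having length $q^{-n}$, so the integral equals $q^{-n}\sum_{k=1}^{q^n}\inf_{I_{k;n}}f$, which is \eqref{e:PNn}.

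There is no real obstacle here; the whole calculation is a few lines once one sees the telescoping. The only point requiring care is the bookkeeping of null sets: \eqref{e:latest} has been established only for $x$ with $f(x)>0$, and the digit representation is well-defined only off the Lebesgue null set of base-$q$ fractions, but neither of these exceptional sets contributes to the integral against $f$.
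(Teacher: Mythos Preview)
Your proof is correct and uses essentially the same ingredients as the paper's: both derive \eqref{e:latest} directly from the coupling definition, then obtain \eqref{e:PNn} by integrating against $f$ and exploiting the telescoping structure of the $c_{x_1,\ldots,x_k}$. The only cosmetic difference is the order of operations---the paper first swaps the sum and integral to get $\sum_{k=0}^n\sum_{j=1}^{q^k}c_{j;k}q^{-k}$ and then telescopes this double sum, whereas you telescope inside the integral for fixed $x$ and then integrate the resulting piecewise-constant function; your ordering is arguably a touch cleaner.
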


\begin{remark} Corollary~\ref{c:latest} is used  in Section~\ref{s:examples} to quantify how many digits are needed to make the remainder uniformly distributed with sufficiently high probability. 

Since a PDF is only defined up to a set of measure zero, it is possible for a distribution to have several PDFs that are almost everywhere lower semi-continuous but give rise to different constants $c_{x_1,\ldots,x_n}$. Hence  the distribution of $(X_1,\ldots,X_N)$ is not uniquely defined. For example, if $X\sim\mu$, letting $f$ be the indicator function on $[0,1)$ gives $N=0$ almost surely, whilst 
	letting $f$ be the indicator function on $[0,1)\setminus\{x_0\}$ for some $x_0\in[0,1)$ gives $\mathrm P(N\le n)=1-q^{-n}$. By \eqref{e:PNn}, in order to make $N$ as small as possible, we prefer a version of $f$ which is as large as possible.
\end{remark}

\begin{proof}
The proof of Theorem~\ref{t:coupling} gives immediately \eqref{e:latest}. Thus, for $n=0,1,...$,
\[
\mathrm P(N\le n)=\int_0^1 \mathrm P(N\le n\,|\,X=x)f(x)\,\mathrm dx
=\sum_{k=0}^n \sum_{j=1}^{q^k} c_{j;k}q^{-k}.
\]
So $\mathrm P(N=0)=c_\emptyset$ in agreement with \eqref{e:PNn}. For $n=1,2,...$, we have
\begin{align*}
\sum_{k=0}^n \sum_{j=1}^{q^k} c_{j;k}q^{-k}
&=c_\emptyset+\sum_{k=1}^n \sum_{(x_1,...,x_k)\in\{0,1,...,q-1\}^k}c_{x_1,...,x_k}q^{-k}\\
 &=\inf_{I_\emptyset}f+\sum_{x_1\in\{0,1,...,q-1\}}\left(\inf_{I_{x_1}}f-\inf_{I_\emptyset}f\right)q^{-1}+...\\
	&+\sum_{(x_1,...,x_n)\in\{0,1,...,q-1\}^n}\left(\inf_{I_{x_1,...,x_n}}f-\inf_{I_{x_1,...,x_{n-1}}}\right)q^{-n}\\
	&=q^{-n}\sum_{(x_1,...,x_n)\in\{0,1,...,q-1\}^n}\inf_{I_{x_1,...,x_n}}f\\
	&=q^{-n}\sum_{j=1}^{q^n} \inf_{I_{j;n}}f. 
\end{align*}
Thereby \eqref{e:PNn} follows.
\end{proof}

\begin{corollary}\label{c:new}
Let the situation be as in Theorem~\ref{t:coupling}. The output of  
the following simulation algorithm  is distributed as $X\sim f$:
\begin{enumerate}
\item[(a)] Draw
$N$ from \eqref{e:PNn}.
\item[(b)] Conditionally on $N$, generate
a discrete random variable $K$ with 
 \begin{equation}\label{e:def-cond}
	\mathrm P(K=\am{k-1}\,|\,N=n)\propto c_{k;n},\quad k=1,...,q^n,\ n=0,1,...
	\end{equation}
	\item[(c)] Independently of $(N,K)$ pick a random variable $U\sim\mu$.
\item[(d)]  Output $(K+U)q^{-N}$.
\end{enumerate}
\end{corollary}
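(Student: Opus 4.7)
The plan is to observe that in the coupling of Theorem~\ref{t:coupling} we may write
\begin{equation*}
X = q^{-N}(K_0+U_0),
\end{equation*}
where $K_0 = \sum_{i=1}^N X_i q^{N-i}\in\{0,1,...,q^N-1\}$ records the first $N$ digits of $X$ (interpreted as $0$ when $N=0$) and $U_0 = T^N(X)\in[0,1)$. The algorithm outputs $q^{-N}(K+U)$, so by the measurability of $(n,k,u)\mapsto q^{-n}(k+u)$ it suffices to verify that $(N,K,U)$ has the same joint distribution as $(N,K_0,U_0)$. Once this is done, the displayed identity immediately forces the output to be distributed as $X\sim f$.

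I would split the verification into three pieces. First, the marginal of $N$ produced by step (a) agrees with the marginal of $N$ in the coupling by \eqref{e:PNn} of Corollary~\ref{c:latest}. Second, the proof of Theorem~\ref{t:coupling} shows that $T^N(X)\sim\mu$ is independent of $(X_1,...,X_N)$, and a short conditioning argument extends this to independence of the pair $(N,K_0)$; this matches step (c), where $U\sim\mu$ is drawn independently of $(N,K)$. Third, for the conditional law of $K_0$ given $N=n$, the same proof provides the conditional PDF $f(x\,|\,n)=c_{x_1,...,x_n}/\mathrm P(N=n)$, which is constant on each interval $I_{k;n}$. Integrating over $I_{k;n}$ yields
\begin{equation*}
\mathrm P(K_0=k-1\,|\,N=n)=\frac{q^{-n}c_{k;n}}{\mathrm P(N=n)}\propto c_{k;n},
\end{equation*}
exactly the distribution imposed in \eqref{e:def-cond}. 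As a consistency check, summing this identity over $k\in\{1,...,q^n\}$ recovers $\mathrm P(N=n)=q^{-n}\sum_{k=1}^{q^n}c_{k;n}$, which agrees with \eqref{e:PNn}.

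Combining these three observations gives equality in joint distribution of $(N,K,U)$ and $(N,K_0,U_0)$, so the algorithm's output $q^{-N}(K+U)$ has the same distribution as $X\sim f$. There is no real obstacle here: every ingredient has already appeared in the proofs of Theorem~\ref{t:coupling} and Corollary~\ref{c:latest}, and the only task is to read off the marginal and conditional distributions supplied by the coupling and recognize that steps (a)--(c) reproduce them verbatim.
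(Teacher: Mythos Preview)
Your argument is correct. Both proofs rely on the same ingredients from the proof of Theorem~\ref{t:coupling} and Corollary~\ref{c:latest} --- the conditional density $f(x\,|\,n)=c_{x_1,...,x_n}/\mathrm P(N=n)$ and the identity $\mathrm P(N=n)=q^{-n}\sum_k c_{k;n}$ --- but they assemble them differently. The paper proceeds by a direct density computation: conditioned on $N=n$, it works out the PDF of $(K+U)q^{-N}$ on each $I_{k;n}$, then averages over $n$ to recover $\sum_{n\ge0}c_{x_1,...,x_n}$ and identifies this with \eqref{e:hurra}. You instead recognise that the coupling already expresses $X$ as $q^{-N}(K_0+U_0)$ and reduce the problem to checking that $(N,K,U)\stackrel{d}{=}(N,K_0,U_0)$, which you do component by component. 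Your route is a bit more structural and avoids recomputing the unconditional density; the paper's route is a shade more self-contained since it does not require going back to the decomposition $X=q^{-N}(K_0+U_0)$. One small remark: the extension to independence of $U_0$ from $(N,K_0)$ is not really a conditioning argument --- it follows immediately because $(N,K_0)$ is a measurable function of $(X_1,\ldots,X_N)$.
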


\begin{proof}
Let $a_n=\sum_{k=1}^{q^n}c_{k;n}$ be the normalizing constant in \eqref{e:def-cond}.
Conditioned on $N=n$ with $\mathrm P(N=n)>0$, steps (b) and (c) give that $U\sim\mu$ and $K$ are independent, so the conditional distribution of
	  $(K+U)q^{-N}$ is absolutely continuous with a conditional PDF given by
\[f(x\,|\,n)=q^{n}c_{k;n}/a_n\quad\mbox{if $x\in I_{k;n}$}.\]
	 \am{Moreover, we get from \eqref{e:PNn} that
$\mathrm P(N=0)=c_\emptyset$ and
\[\mathrm P(N=n)=\mathrm P(N\le n)-\mathrm P(N<n)=a_n q^{-n},\quad n=1,2,...\]}
Therefore, the (unconditional) distribution of $(K+U)q^{-N}$ is absolutely continuous with a PDF which at each point $x=\sum_{n=1}^\infty x_nq^{n}\in[0,1)$ with $x_n=\lfloor T^{n-1}(x)q\rfloor$ is given by
\[\sum_{n=0}^\infty f(x\,|\,n)\mathrm P(N=n)=\sum_{n=0}^\infty q^{n}\left(c_{x_1,...,x_n}/a_n\right)a_n q^{-n}=\sum_{n=0}^\infty c_{x_1,...,x_n}.\]
This PDF agrees with \eqref{e:hurra}, so $(K+U)q^{-N}\sim f$.
\end{proof}

	Denote by $\mathcal{B}$ the class of Borel subsets of $[0,1)$.
	The total variation distance between two probability measures $\nu_1$ and $\nu_2$ defined on $\mathcal{B}$ and
	with PDFs $g_1$ and $g_2$, respectively,  is given by
	\begin{equation}\label{e:TV2}
	d_{\mathrm{TV}}(\nu_1,\nu_2)= \sup_{A\in \mathcal{B}} |\nu_1(A) - \nu_2(A)| =
	\frac12\|g_1-g_2\|_1,
	\end{equation}
	see e.g.\ Lemma~2.1 in \cite{Tysbakov}. Then Theorem~\ref{t:coupling} shows the following.

\begin{corollary}\label{c:1} Let the situation be as in Theorem~\ref{t:coupling}. Then
\begin{equation}\label{e:ci}
 d_{\mathrm{TV}}(P_n,\mu)\le \mathrm P(N> n),\quad n=0,1,...
 \end{equation}
\end{corollary}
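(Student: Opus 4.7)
The plan is to apply the standard coupling inequality: for any two random variables $Y_1 \sim \nu_1$ and $Y_2 \sim \nu_2$ defined on a common probability space, $d_{\mathrm{TV}}(\nu_1,\nu_2) \le \mathrm P(Y_1 \ne Y_2)$. Taking $Y_1 = T^n(X)$, which has distribution $P_n$, I need to exhibit a random variable $Y_2 \sim \mu$ on the same probability space as the coupling from Theorem~\ref{t:coupling}, such that $\{Y_1 \ne Y_2\} \subseteq \{N > n\}$.

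The natural construction is: on the event $\{N \le n\}$, set $Y_2 = T^n(X)$; on the event $\{N > n\}$, let $Y_2$ be an auxiliary random variable drawn from $\mu$ independently of everything else. Since by definition $Y_2$ agrees with $T^n(X)$ whenever $N \le n$, the event $\{Y_2 \ne T^n(X)\}$ is contained in $\{N > n\}$, giving the desired bound $\mathrm P(Y_2 \ne T^n(X)) \le \mathrm P(N > n)$.

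The remaining step is to verify that $Y_2 \sim \mu$. I would do this by conditioning on $N$. For $k > n$, conditionally on $N = k$, $Y_2 \sim \mu$ by construction. For $k \le n$, the proof of Theorem~\ref{t:coupling} shows that conditionally on $N = k$ we have $T^k(X) \sim \mu$, so writing $T^n(X) = T^{n-k}(T^k(X))$ and invoking the invariance of $\mu$ under the base-$q$ transformation $T$ (which is the fact underlying item~(b) of the introduction), we obtain that $T^n(X) \sim \mu$ conditionally on $N = k$. Summing over $k$ then yields $Y_2 \sim \mu$ marginally, and the corollary follows from the coupling inequality.

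The only subtle point is the verification of $Y_2 \sim \mu$ on $\{N \le n\}$, which relies on the fact that $\mu$ is $T$-invariant; this is standard and essentially a direct calculation, so I do not expect any real obstacle here. The overall proof should be short.
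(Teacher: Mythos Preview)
Your proposal is correct and uses the same coupling-inequality strategy as the paper. The only difference is that the paper, instead of introducing an auxiliary uniform on $\{N>n\}$, invokes the simulation representation $X=(K+U)q^{-N}$ of Corollary~\ref{c:new} and couples $T^n(X)$ directly with $T^n(U)\sim\mu$.
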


\begin{remark}\label{r:1} 
In general the coupling inequality \eqref{e:ci} is sharp:  
For $n=0,1,...$, let $b_n=1-d_{\mathrm{TV}}(P_n,\mu)=\int_0^1\min\{1,f_n(t)\}\,\mathrm dt$ (with $f_0=f$). 
 It is well-known that $b_n$ is the maximal number such that there exists a coupling between $T^n(X)\sim P_n$ and a uniform random variable $U\sim \mu$ for which $T^n(X)=U$ with probability $b_n$ 
 (see e.g.\ Theorem 8.2 in \cite{thorisson}). Thus $d_{\mathrm{TV}}(P_n,\mu)= \mathrm P(N> n)$ if and only if $\int_0^1\min\{1,f_n(t)\}\,\mathrm dt=q^{-n}\sum_{k=1}^{q^n} \inf_{I_{k;n}}f$. In particular, $d_{\mathrm{TV}}(P_0,\mu)= \mathrm P(N> 0)$ if and only if $X\sim\mu$.
 
 It follows from Corollary~\ref{c:latest} and \ref{c:1} that \eqref{e:condition} implies $\lim_{n\rightarrow\infty} d_{\mathrm{TV}}(P_n,\mu)=0$. In Theorem~\ref{t:1} below we show that \eqref{e:condition} is not needed for this convergence result.
\end{remark}

\begin{proof} Using Corollary~\ref{c:new}, let
$X=(K+U)q^{-N}$. For $n=0,1,...$, if $Q_n$ denotes the probability distribution of $T^n(U)$, then $Q_n=\mu$, and so
\[d_{\mathrm{TV}}(P_n,\mu) = d_{\mathrm{TV}}(P_n,Q_n) \le \mathrm P(T^n(X) \ne T^n(U))\le\mathrm P(N>n),\]  
where the first inequality is the standard coupling inequality for the coupled random variables $T^n(X)$ and $T^n(U)$, and the last inequality follows since $N\le n$ implies $T^n(X)=T^n(U)$.
  Thereby \eqref{e:ci} is verified.
\end{proof}

\begin{remark}\label{r:Wass} 
By the Kantorovich-Rubinstein theorem, the Wasserstein distance between two probability measures $\nu_1$ and $\nu_2$ on $[0,1]$ is given by 
\begin{equation*}
	W_1(\nu_1,\nu_2)= \inf_{\gamma \in \Gamma (\nu_1,\nu_2)} \{ \mathrm E |Y_1 - Y_2| \mid (Y_1,Y_2)\sim \gamma\},
\end{equation*}
where $\Gamma(\nu_1,\nu_2)$ consists of all couplings of $\nu_1$ and $\nu_2$. By \cite[Thm 4]{GibbsSu}, 
\begin{equation*}
W_1(\nu_1,\nu_2) \leq d_{\mathrm{TV}}(\nu_1,\nu_2), 
\end{equation*}
so by Remark \ref{r:1}, Corollary \ref{c:1} implies 
$$W_1(P_n,\mu)\le \mathrm P(N>n)\to 0.$$ 
The latter bound can be improved by using the coupling between $T^n(X)$ and $T^N(X)\sim \mu$ to obtain
\begin{align*}
	W_1(P_n,\mu){}& \leq \mathrm E | (T^n(X)- T^N(X))1_{N>n}|\\
	 &\leq \int_0^1 \max\{|x|,|x-1|\} \mathrm{d} x \ \mathrm P(N>n) \\
	 &= \frac{1}{4}\mathrm P(N>n).
\end{align*}
 See also \cite{GibbsSu} for an overview of the relation between the total variation distance and other measures of distance between probability measures.
\end{remark}

\section{Asymptotic results} 
\label{s:results-q}

We need some notation for the following theorem. For a real, measurable function $g$ defined on $(0,1)$, denote its $L_1$- and supremum-norm
by
$\|g\|_1=\int_0^1|g(t)|\,\mathrm dt$ and
$\|g\|_\infty=\sup_{x\in(0,1)}|g(x)|$, respectively, and denote the corresponding $L_1$-space by $L_1(0,1)=\{g\,|\,\|g\|_1<\infty\}$
(here, $\|g\|_\infty$  may be infinite when there are no further assumptions on $g$). Let $\bar L_1(0,1)=\{g\,| \int_0^1g(t) dt =1,\allowbreak \|g\|_1 < \infty\}$ be the subset of functions with finite $L_1$-norm and integral over $[0,1]$ equal one, and $\bar L_1'(0,1)\subset \bar L_1(0,1)$ its subset of  differentiable functions $g$ such that $\|g'\|_\infty<\infty$. For $g\in \bar L_1'(0,1)$, $n\in\mathbb N$, $j=0,1,...,q^n-1$, and $0<x<1$, define $g_{n,j}'(x)=g'(x)$ if $q^{-n}j<x<q^{-n}(j+1)$ and $g_{n,j}'(x)=0$ otherwise, and define
\begin{equation}\label{gdef}
g_n(x) = q^{-n}\sum_{j=0}^{q^n-1} g(q^{-n}(j+x)).
\end{equation}
Henceforth, we also think of $f$ as an element of $\bar L_1(0,1)$. 

\bt\label{t:1} If  $f \in \bar L_1(0,1)$ and $g \in \bar L_1'(0,1)$ then
\begin{equation}\label{e:uniform1}
d_{\mathrm{TV}}(P_n,\mu) 
\le \frac{1}{2}\|f-g\|_1+ \frac{1}{6}q^{-2n} \sum_{j=0}^{q^n-1}\|g_{n,j}'\|_\infty
\le \frac{1}{2}\|f-g\|_1+\frac{1}{6}q^{-n} \|g'\|_\infty.
\end{equation}
In particular,
\begin{equation}\label{e:uniform2}
\lim_{n\rightarrow\infty} d_{\mathrm{TV}}(P_n,\mu) 
=0
\end{equation}
and we have the following sharper convergence results.
If $f \in \bar L_1'(0,1)$ then $P_n$ converges exponentially fast:
\begin{equation}\label{e:uniform3}
d_{\mathrm{TV}}(P_n,\mu) \le \frac{1}{6}q^{-2n} \sum_{j=0}^{q^n-1}\|f_{n,j}'\|_\infty
\le \frac{1}{6}q^{-n} \|f'\|_\infty.
\end{equation}
If $\|f\|_\infty < \infty $ and $f$ is continuous except for finitely many points, then 
\begin{equation} \label{fcont}
|f_n(x)-1| \to 0 \quad \mbox{uniformly for $x\in(0,1)$}.
\end{equation}
If $f$ is twice differentiable with $\|f''\|_\infty<\infty$ then we have the following improvement of \eqref{e:uniform3}:
\begin{align} \label{twicediff}
d_{\mathrm{TV}}(P_n,\mu)&= \frac{1}{8} q^{-2n} \left|\sum_{j=0}^{q^n -1} f'(\xi_{nj})\right| + O(\|f''\|_\infty q^{-2n}) \nonumber\\
&\le \frac{1}{8} q^{-n} \|f'\|_\infty + O(\|f''\|_\infty q^{-2n})
\end{align}
where $\xi_{n,j} \in (q^{-n}j,q^{-n}(j+1))$ is arbitrary.
\et

Before proving this theorem we need the following lemma.

\bl\label{technical}
	Let $f \in \bar L_1(0,1)$, $g \in \bar L_1'(0,1)$. For every $x\in(0,1)$,
	\begin{equation}\label{e:g_n-1 bound}
	|g_n(x)-1| \le q^{-2n} \left(x^2-x + \frac{1}{2} \right)\sum_{j=0} ^{q^n-1} \|g'_{n,j}\|_\infty \le \frac{1}{2}q^{-n}\|g'\|_\infty,
	\end{equation}
	and 
	\begin{equation}\label{e:0000}
	\int_0^1 |f_n(x) -g_n(x)|\,\mathrm dx \le \|f-g\|_1.
	\end{equation}
	If $g$ is twice differentiable on $(0,1)$ with $\|g''\|_\infty<\infty$ then for every $x\in(0,1)$,
	\begin{equation}\label{gtwicediff}
	g_n(x) - 1 = q^{-2n}\left(x-\frac{1}{2}\right) \sum_{j=0} ^{q^n-1}  g'(\xi_{n,j}) + O(\|g''\|_\infty q^{-2n}),
	\end{equation}
	where each $\xi_{n,j} \in (q^{-n}j,q^{-n}(j+1))$ is arbitrary.
\el

\begin{remark}
	Of course, (\ref{e:g_n-1 bound})   and (\ref{gtwicediff}) hold with $g_n$ replaced by $f_n$ if $f$ is differentiable respectively twice differentiable with $\|f''\|_\infty < \infty.$  For Example~\ref{ex:2} below it is useful to realize that in (\ref{gtwicediff}), $q^{-n} \sum_{j=0}^{q^n-1}g'(\xi_{n,j})$ is a Riemann sum for the integral $\int_0^1 g'(t) \mathrm dt$.
\end{remark}

\begin{proof}
	Let $x\in(0,1)$.    From \eqref{gdef} we have
	\begin{align} \nonumber
	g_n(x) - 1 &= \sum_{j=0} ^{q^n-1} \int_{q^{-n}j}^{q^{-n}(j+1)} [g(q^{-n}(j+x)) - g(t)]\, \mathrm d t \\
	& = \sum_{j=0} ^{q^n-1} \int_{0}^{1} q^{-n} [g(q^{-n}(j+x)) - g(q^{-n}(j+y))]\, \mathrm d y.\label{dens_diff}
	\end{align}
	If $g$ is differentiable on $(0,1)$ with $\|g'\|_\infty<\infty$, we get
	by the mean value theorem, 
	$$ |g(q^{-n}(j+x)) - g(q^{-n}(j+y))| \le \|g'_{n,j}\|_\infty q^{-n}|x-y|,$$
	which yields the bound
	\begin{equation} \label{e:gn_diff}
	|g_n(x) - 1| \leq q^{-2n}\int_{0}^{1} |x-y|\, \mathrm  d y \sum_{j=0} ^{q^n-1} \|g'_{n,j}\|_\infty = q^{-2n}\left(x^2-x+\frac{1}{2}\right) \sum_{j=0} ^{q^n-1} \|g'_{n,j}\|_\infty. 
	\end{equation}
	Thereby \eqref{e:g_n-1 bound} follows.
	Moreover,
		\begin{align} \nonumber
		\int_0^1|f_n(x) - g_n(x)|\,\mathrm dx 
		&\le \sum_{j=0}^{q^n-1} \int_0^1q^{-n}|f(q^{-n}(j+x)) -g(q^{-n}(j+x))|\,\mathrm dx\\ &= \|f-g\|_1
		\label{fn-gn}
		\end{align} 
	whereby \eqref{e:0000} follows.
	If $g$ is twice differentiable on $(0,1)$ with $\|g''\|_\infty<\infty$, the mean value theorem gives
	$$ g(q^{-n}(j+x)) - g(q^{-n}(j+y)) = g'(\xi_{x,y}) q^{-n} (x-y) =  g'(\xi_{n,j}) q^{-n} (x-y) + O(\|g''\|_\infty q^{-2n}), $$
	where $\xi_{x,y} \in (q^{-n}j,q^{-n}(j+1))$ depends on $x$ and $y$ and $\xi_{n,j} \in (q^{-n}j,q^{-n}(j+1))$ is arbitrary.  The second equality was obtained by applying the mean value  theorem to $g'(\xi_{x,y})-g'(\xi_{n,j})$. 
	Inserting this into \eqref{dens_diff} yields
	\[g_n(x) - 1 = q^{-2n}\sum_{j=0} ^{q^n-1}  g'(\xi_{n,j})  \int_{0}^{1}  (x-y)\, \mathrm d y + O(\|g''\|_\infty q^{-2n}), \]
	which reduces to \eqref{gtwicediff}.
\end{proof}  

We are now ready for the proof of Theorem \ref{t:1}.

\begin{proof} 
	We have
		\begin{align} \nonumber
		d_{\mathrm{TV}}(P_n,\mu) &= \frac{1}{2} \int_0^1 |f_n(x) - 1|\,\mathrm dx\\ 
		& \le \frac{1}{2}\int_0^1 |f_n(x) - g_n(x)|\,\mathrm dx+ \frac{1}{2}\int_0^1 |g_n(x) -1|\,\mathrm dx \nonumber\\ 
		&\le \frac{1}{2}\|f-g\|_1 + \frac{1}{6}q^{-2n}\sum_{j=0}^{q^n-1}\|g'_{nj}\|_\infty \label{e:dTVfg} \\
		& \le \frac{1}{2}\|f-g\|_1 + \frac{1}{6}q^{-n}\|g'\|_\infty \nonumber
		\end{align}
		where we get the equality from \eqref{e:TV2} and 
		the second inequality from \eqref{e:g_n-1 bound}, \eqref{e:0000}, and since $\int_0^1\left(x^2-x +1/2\right)\,\mathrm dx = 1/3$.
		Thereby \eqref{e:uniform1} is verified.	
	Taking $n\to \infty $ in \eqref{e:uniform1} and using that $\bar L'_1(0,1)$ is dense in $\bar L_1(0,1)$, we get \eqref{e:uniform2}.
	Equation (\ref{e:uniform3}) follows from \eqref{e:uniform1}  by setting $g = f$.
	
		For  the proof of \eqref{fcont} we suppose $f$ is continuous except at  $x_1,\ldots,x_m \in (0,1)$  and set $x_0=0$ and $x_{m+1}=1$. Let $\delta>0$ and 
		\begin{align*}
		I_n&=\{j \in \{ 0,1,\ldots,q^n-1\} \mid \exists i \in \{0,1,\ldots,m+1\}: | q^{-n}j - x_i | < \delta \},\\
		J_n&= \{ 0,1,\ldots,q^n-1\}\backslash I_n.
		\end{align*}
		By \eqref{dens_diff}, 
		\begin{align}
		\nonumber
		f_n(x)-1 &= \sum _{j\in I_n} \int_{ q^{-n}j}^{q^{-n}(j+1)} (f(q^{-n}(j+x)) -f(t))\,\mathrm dt\\
		&+ \sum _{j\in J_n} \int_{ q^{-n}j}^{q^{-n}(j+1)} (f(q^{-n}(j+x)) -f(t))\,\mathrm dt. \label{e:disc_proof}
		\end{align}
		Given $\varepsilon >0$, we choose $\delta$ so that  $\delta < \varepsilon / (6(m+2)\|f\|_\infty)$. Then, since the cardinality of $I_n$ is at most $(m+2)(2q^n \delta + 1)$, the first sum in \eqref{e:disc_proof} is bounded by
		\[\left(\frac{2q^n\varepsilon}{6\|f\|_\infty}+m+2\right)q^{-n}\|f\|_\infty=\frac{\varepsilon}{3}+(m+2) q^{-n}\|f\|_\infty<\frac{\varepsilon}{2}\]
		for $n$ sufficiently large. Moreover, for $n$ large enough, the second sum in \eqref{e:disc_proof} is bounded by $\varepsilon/2$ since $f$ is uniformly continuous on $(0,1)\backslash \bigcup_{i=0}^{m+1} (x_i -\delta/2, x_i+\delta/2)$, which is a closed set. Thus, for large enough $n$, $|f_n(x) - 1|< \varepsilon$ which gives \eqref{fcont} since $\varepsilon >0$ is arbitrary.
	
	To prove (\ref{twicediff}) we use (\ref{gtwicediff}) with $g$ replaced by $f$.
	Then, for every $A\in\mathcal B$,
	$$\int_A (f_n(t) -1)\,\mathrm dt = q^{-2n}\int_A\left(t-\frac{1}{2}\right)\,\mathrm dt \sum_{j=0}^{q^n-1}f'(\xi_{nj}) + O(\|f''\|_\infty q^{-2n}).$$
	 We have
		$$\sup_{A\in \mathcal{B} }\bigg|\int_A\left(t-\frac{1}{2}\right)\,\mathrm dt\bigg| = \frac{1}{2}\sup_{A\in \mathcal{B} }\bigg|\int_A|2t-1|\,\mathrm dt\bigg| = \frac{1}{4} \int_0^1 (2t-1)\,\mathrm dt = \frac{1}{8}$$
		where the second identity follows from \eqref{e:TV2}. This gives
		 (\ref{twicediff}).
\end{proof}

\begin{remark} In continuation of Remark~\ref{r:1}, by
 Theorem~\ref{t:1}, $b_n\rightarrow 1$ and under weak conditions the convergence is exponentially fast. 
\end{remark}

\section{So how many digits are needed?}\label{s:examples}

This section starts with some theoretical statistical considerations and continues then with some specific examples.

Consider a parametric model for the probability distribution of $X$ 
given by a parametric class of lower semi-continuous densities $f_\theta$ where $\theta$ is an unknown parameter. By Theorem~\ref{t:coupling} this specifies a parametric model for $(X_1,...,X_N)$ which is independent of $T^N(X)\sim\mu$. In practice we cannot expect $N$ to be observable, but let us imaging it is. Then, according to general statistical principles (see e.g.\ \cite{B-N}), statistical inference for $\theta$  should be based on the  sufficient statistic
$(X_1,...,X_N)$, whilst $T^N(X)$ is an ancillary statistic and hence contains no information about $\theta$. Moreover, Theorem~\ref{t:1} ensures (without assuming that the densities are lower semi-continuous) that  $T^n(X)$ is approximately uniformly distributed. Hence, if $n$ is `large enough', nearly
all information about $\theta$ is contained in $(X_1,...,X_n)$.  

\begin{remark}
For another paper it could be interesting to consider a so-called missing data approach for a parametric model of the distribution of $(X_1,...,X_N)$, with an unknown parameter $\theta$ and 
treating $N$ as an unobserved statistic (the missing data): Suppose $X^{(1)},...,X^{(k)}$ are IID copies of $X$, with corresponding `sufficient statistics' 
$(X^{(i)}_1,...,X^{(i)}_{N^{(i)}})$, $i=1,...,k$. The EM-algorithm may be used for estimation of $\theta$. Or a Bayesian approach may be used, imposing a prior distribution for $\theta$ and then considering the posterior distribution of $(N^{(1)},...,N^{(k)},\theta)$.
\end{remark}

\jm{According to Corollary~\ref{c:latest}, the number of digits we need will in general depend on the realization of $X=x$. As a measure for this dependence, for $f(x)>0$ and $n=0,1,...$, we may consider $\mathrm P(N>n\,|\,X=x)$ as a function of $x$, which can be calculated from \eqref{e:latest}. Since $N\le n$ implies $T^n(X)\sim\mu$,
an overall measure which quantifies the number $n$ of digits needed is given by $\mathrm P(N>n)$, cf.\  \eqref{e:PNn}. The use of these measures requires that $f$ is lower semi-continuous, whilst the bounds in Theorem~\ref{t:1} for the total variation distance $d_{\mathrm{TV}}(P_n,\mu)$ hold without this condition.
The following Examples~\ref{ex:1} and \ref{ex:2} demonstrate how these measures 
can be used to quantify the number $n$ of digits needed in order that $N> n$ (conditioned or not on $X=x$) 
with a small probability or that $d_{\mathrm{TV}}(P_n,\mu)$ is small. }

\begin{example}\label{ex:1}
	Any number $y\not=0$ can uniquely be written as $y=sq^k(y_0+y_f)$ where $s=s(y)\in\{\pm1\}$ is the sign of $y$, $k=k(y)\in\mathbb Z$ determines the decimal point of $y$ in base-$q$, $y_0=y_0(y)\in\{1,...,q-1\}$ is the leading digit of $y$ in base-$q$, and $y_0+y_f$ is the so-called significand of $y$ in base-$q$, where $y_f=y_f(y)\in[0,1)$ is the fractional part of $y_0+y_f$ in base-$q$. 
	Correspondingly, 
	consider any real-valued random variable $Y\not=0$ (or just $\mathrm P(Y=0)=0$), so (almost surely) $Y=Sq^K(X_0+X)$ where
	$S=s(Y)$, $K=k(Y)$, $X_0=y_0(Y)$, and $X=y_f(Y)$ are random variables. Let $X_1,X_2,...$ be the digits of $X$ in the base-$q$ expansion, cf.\ \eqref{e:q}. We call $X_0,X_1,X_2,...$ the significant digits of $Y$ in base-$q$. 
	By definition $Y$ satisfies the extended Newcomb-Benford law if
	\begin{equation}\label{e:Benford}
	\mathrm P(X_0=x_0,...,X_n=x_n)=\log_q\left(1+1\bigg/\sum_{j=0}^n q^{n-j}x_j\right)
	\end{equation}
	for $n=0,1,...$ and any $x_0\in\{1,...,q-1\}$ and $x_j\in\{0,1,...,q-1\}$ with $1\le j\le n$. Equivalently, the log-significand of $Y$ in base-$q$, $\log_q(X_0+X)$, is uniformly distributed on $[0,1)$ (Theorem 4.2 in \cite{Berger}). 
Then $X$ has CDF and PDF given by
	\begin{equation}\label{e:f-Benford}
	F(x)=\sum_{j=1}^{q-1}\left(\log_q(j+x)-\log_qj\right),\quad f(x)=\sum_{j=1}^{q-1} \frac{1}{\ln q}\frac{1}{j+x},
	\end{equation}
	for $0\le x\le 1$. 
	
	The extended Newcomb-Benford law applies to a wide variety of real datasets, see \cite{Hill, Berger} and the references therein. 
	The law is equivalent to appealing scale invariance properties: Equation 
	\eqref{e:Benford} is equivalent to that $Y$ has scale invariant significant digits (Theorem~5.3 in \cite{Berger}) 
	or just that
	there exists some $d\in\{1,...,\allowbreak q-1\}$ such that $\mathrm P(y_0(aY)=d)$ does not depend on $a>0$ 
	(Theorem~5.8 in \cite{Berger}). 
	Remarkably, for any positive random variable $Z$ which is independent of $Y$, if the extended Newcomb-Benford law is satisfied by $Y$, it is also satisfied by $YZ$ (Theorem~8.12 in \cite{Berger}). 
	
	For the remainder of this example, suppose \eqref{e:Benford} is satisfied.  Considering  \eqref{e:PNn} gives for $n=0,1,...$ that
\[\mathrm P(N\le n)=\frac{q^{-n}}{\ln q}\sum_{j=1}^{q-1}\sum_{k=1}^{q^n}\frac{1}{j+kq^{-n}}.\]
\jm{The tail probabilities $\mathrm P(N>n)$ decrease quickly as $n$ and $q$ increase, see the left panel in Figure~\ref{fig:P(N>n)} for plots of $\mathrm P(N>n)$ against $n$ for $q=2,3,5,10$.} {\color{black}The middle panel of Figure~\ref{fig:P(N>n)} shows $\mathrm P(N>1\,|\,X=x)$ as a function of $x$ for $q=10$. We see large fluctuations, with probabilities dropping to zero when approaching the right limit of the intervals $I_{k;1}$, where $\inf_{I_{k;1}} f $ is attained. To avoid these fluctuations, the right panel of Figure~\ref{fig:P(N>n)} shows an upper bound on $\mathrm P(N>n\,|\,X=x)$ as a function of $x$ for $q=10$ and $n=0,1,2,3$. The upper bound is found by noting that on each $I_{k;n}$, $\mathrm P(N>n\,|\,X=x)$ is convex decreasing towards zero. Hence an upper bound is given by evaluating at the left end points and interpolating linearly. The plot shows that $\mathrm P(N>n\,|\,X=x)$ is very close to zero for all $x$ already for $n=2$.}
\begin{figure}\label{fig:P(N>n)}
	\begin{center}
	\includegraphics[width=\textwidth]{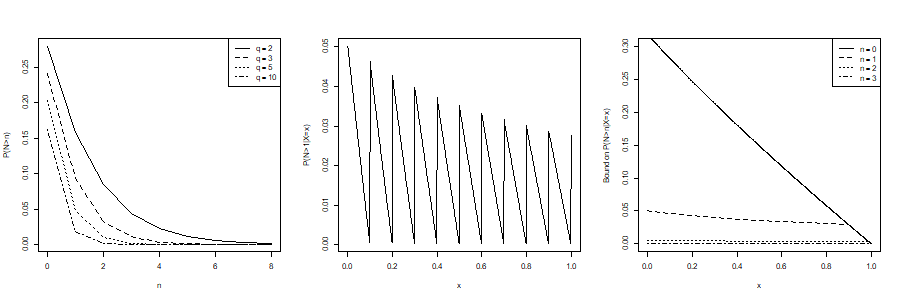}
	\end{center}
\caption{{\color{black}Left panel: $\mathrm P(N>n)$ as a function of $n$ for $q=2,3,5,10$. Middle panel: $\mathrm P(N>1\,|\,X=x)$ as a function of $x$ for $q=10$.
Right panel: An upper bound for $\mathrm P(N>n\,|\,X=x)$ as a function of $x$ for $n=0,1,2,3$ and $q=10$.}}
\end{figure}	
	
This  is also in accordance with Theorem~\ref{t:1} stating that $T^n(X)$ converges to a uniform distribution on $[0,1)$ and hence the first digit $X_n$ of $T^n(X)$ is approximately uniformly distributed on $\{0,1,...,q-1\}$ when $n$ is large.	For $n=1,2,...$ and $x_n\in\{0,1,...,q-1\}$, we have 
	\begin{equation*}
	\mathrm P(X_n=x_n)
	=\log_q\left(\prod_{j=1}^{q-1}\prod_{i=1}^{q^{n-1}}\left(1+\frac{1}{jq^n+(i-1)q + x_n}\right)\right)
	\end{equation*}
	where $\mathrm P(X_n=x_n)$ is a decreasing function of $x_n$. 
	The left part of Figure~\ref{fig:benford} shows plots of 
	$\mathrm P(X_n=0)-\mathrm P(X_n=q-1)$ versus $n$ for  $q=2,3,5,10$ indicating fast convergence to uniformity and that the convergence speed increases with $q$. The right part of Figure~\ref{fig:benford} illustrates the stronger statement in \eqref{fcont} that the PDF $f_n$ of $T^n(X)$ converges uniformly to the uniform PDF.
	\begin{figure}
		\includegraphics[width=\textwidth]{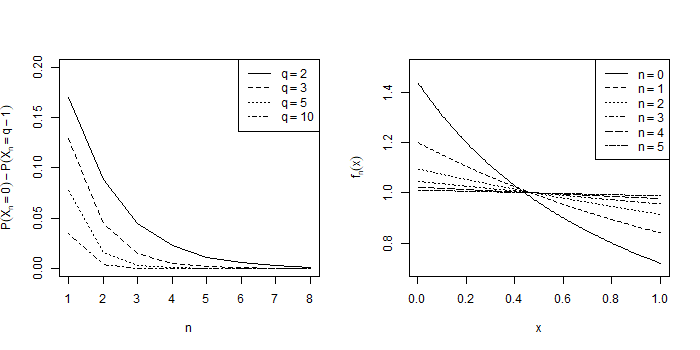}
		\caption{Left panel: $\mathrm P(X_n=0)-\mathrm P(X_n=q-1)$ as a function of $n$ for various values of $q$  when $f$ is as in \eqref{e:f-Benford}. Right panel: $f_n$ when $q=2$ and $n=0,\ldots,5$.}\label{fig:benford}
	\end{figure}
	
	To further illustrate the fast convergence, we drew a sample of 1000 observations with CDF \eqref{e:f-Benford} and made a $\chi^2$ goodness-of-fit test for uniformity of $X_n$. Considering a significance level of 0.05, the rejection rate for 10.000 repetitions is shown in Table~\ref{tab:gof}. Such a $\chi^2$ test can also be used as a test for uniformity of the remainder $T^{n-1}(X)$. A more refined test can be performed by basing the goodness-of-fit test on $2^k$ combinations of the first $k$ digits $(X_n,\ldots,X_{n+k-1})$. The result is shown in Table~\ref{tab:gof} for $k=1,2,3$. When $n=1$ we always rejected the hypothesis that $(X_n,\ldots,X_{n+k-1})$ is uniformly distributed, when $n=2$ the rejection rate decreases as $k$ grows and it is 0.067 for $k=3$, and when $n\ge3$ the rejection rates are close to 0.05 as expected if the hypothesis is true. When we instead tried with a sample of 100 observations, even when $n=1$ the test had almost no power for $k=1,2,3$. 
	\begin{table}\label{tab:gof}
		\begin{center}
			\begin{tabular}{lcccccccc}
				\hline 
				$n$ &1&2&3&4&5&6&7&8
				\\
				\hline
				$k=1$ &1.000& 0.094&0.050&0.054&0.052&0.051&0.054&0.055
				\\
				$k=2$ &1.000&0.081&0.047&0.050& 0.051&0.053&0.052&0.047
				\\
				$k=3$ &1.000&0.067& 0.050& 0.049&0.049&0.050&0.052& 0.052
				\\
			\end{tabular}
		\end{center}
		\caption{Rejection rate for a $\chi^2$ goodness-of-fit  test for uniformity of $(X_n,\ldots,X_{n+k-1})$. }
	\end{table}
\end{example}

\begin{example}\label{ex:2}
	To illustrate how the convergence rate in Theorem \ref{t:1} depends on the smoothness of $f$,
	let $f(t)=\alpha t^{\alpha-1}$ be a beta-density with shape parameters $\alpha>0$ and $1$. Then, $f\in \bar L_1'(0,1)$ if and only if $\alpha=1$ or $\alpha\ge2$. Of course, $P_n$ and $\mu$ agree if $\alpha=1$. For $q=2$, Figure~\ref{fig:plots} shows plots of $d_{\mathrm{TV}}(P_n,\mu)$ and  $\ln(d_{\mathrm{TV}}(P_n,\mu))$ versus $n$ when $\alpha=0.1,0.5,1,1.5,5,10$ as well as  a plot of $\ln(\frac{1}{8} q^{-2n}  \sum_{j=0} ^{q^n-1}\|f'_{n,j}\|_\infty) - \ln(d_{\mathrm{TV}}(P_n,\mu) )$ (cf.\ \eqref{twicediff}) versus $n$ when $\alpha=2,5,10$. For the calculation of $d_{\mathrm{TV}}(P_n,\mu)$ 
	observe that $f_n'$ is $<0$ if $\alpha<1$ and $>0$ if $\alpha>1$, so 
	$f_n(x_0)=1$ for some unique $x_0\in(0,1)$, and hence since $F_n(0)-0=F_n(1)-1=0$,
	$$
	d_{\mathrm{TV}}(P_n,\mu) =\frac{1}{2}\|f_n-1\|_1= \frac{1}{2}\bigg| \int_0^{x_0} (f_n(t) - 1) \mathrm d t  \bigg| +  \frac{1}{2}\bigg| \int_{x_0}^{1} (f_n(t) - 1) \mathrm d t  \bigg| = |F_n(x_0)-x_0|.
	$$ 
	 We used the Newton-Raphson procedure to find $x_0$ (the procedure always converges). 
	
	The first plot in Figure~\ref{fig:plots} shows that for all values of $\alpha$, $d_{\mathrm{TV}}(P_n,\mu)$ goes to zero, as guaranteed by Theorem \ref{t:1}. The second plot indicates that for $\alpha>1$, $d_{\mathrm{TV}}(P_n,\mu)$ decays exponentially at a rate independent of $\alpha$, while for $\alpha<1$, the decay is also exponential, but with a slower rate. The graphs in the third plot seem to approach zero, indicating that for $\alpha\ge 2$, the rate of decay is indeed as given by \eqref{twicediff}, which holds since $f''$ is bounded. In the middle plot, the decay rate also seems to be $q^{-n}$ for $\alpha=1.5$, though this is not guaranteed by Theorem \ref{t:1}. 
	To see why the rate $q^{-n}$ also holds for $1<\alpha<2$, we argue as follows. In 
	\eqref{dens_diff}, \eqref{e:gn_diff}, and \eqref{e:dTVfg}, we may refine to the cases $j=0$ and $j>0$ (observing that $\|f'_{n,j}\|_\infty<\infty$ when $j>0$)
	 to obtain the following modification of \eqref{e:uniform3},
	\begin{equation*}
	d_{\mathrm{TV}}(P_n,\mu) \le q^{-n} \left(\frac{1}{2}\|f_{n,0}\|_\infty + \frac{1}{6}q^{-n}\sum_{j=1}^{q^n-1}\|f'_{n,j}\|_\infty\right).
	\end{equation*}	
	Furthermore, 
	since $|f'|$ is decreasing for $\alpha<2$, $\sum_{j=1}^{q^n-1} \|f_{n,j}'\|_\infty q^{-n}$ is a lower Riemann sum for the improper Riemann integral $\int_0^1 |f'(t)|\, \mathrm dt$, which exists and is finite when $1<\alpha <2$. Consequently, for every $x\in (0,1)$,
	\[
	d_{\mathrm{TV}}(P_n,\mu) \leq q^{-n}\left(\frac{1}{2}\|f_{n,0}\|_\infty + \frac{1}{6}\|f'\|_1\right).
	\]

	\begin{figure}\label{fig:plots}
		\includegraphics[width=\textwidth]{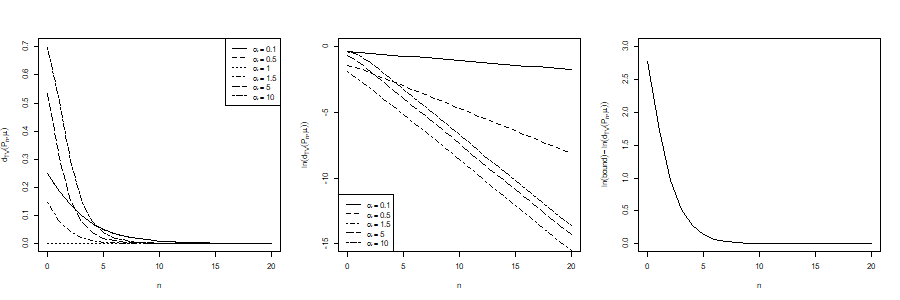}
		\caption{The first two plots show $d_{\mathrm{TV}}(P_n,\mu)$ and $\log(d_{\mathrm{TV}}(P_n,\mu))$, respectively, as a function of $n$ for $q=2$ and various values of $\alpha$. The last plot shows the difference between $\log(\frac{1}{8} q^{-2n}  \sum_{j=0} ^{q^n-1}\|f'_{n,j}\|_\infty)$ and $\log(d_{\mathrm{TV}}(P_n,\mu))$ for three values of $\alpha\ge2$.}
	\end{figure}
	
	As in Example \ref{ex:1}, we tested for uniformity of $T^{n-1}(X)$  by a $\chi^2$ goodness-of-fit test for uniform distribution of the $k=3$ first digits $(X_n,X_{n+1},X_{n+2})$ again using 10.000 replications of samples of 1000 observations from a beta-distri\-bu\-tion with $\alpha=0.1,0.5,1.5,2$. Table~\ref{tab:gof2} shows that for $\alpha=0.1$, uniformity is rejected in all samples for all $n$ indicating that the distribution of the remainder remains far from uniform even for $n=8$. For $\alpha= 0.5$, the rejection rate reaches the 0.05 level for $n=5$, while for $\alpha=1.5$, this happens already for $n=2$ and for $\alpha=5$ it happens around $n=3$ or $n=4$. For $\alpha>1$ close to 1, the results are comparable to those for the Benford law in Example~\ref{ex:1}, while for large $\alpha$ and $\alpha<1$, the rejection rate is higher indicating slower convergence. 
	
	\begin{table}\label{tab:gof2}
		\begin{center}
			\begin{tabular}{lcccccccc}
				\hline 
				$n$ &1&2&3&4&5&6&7&8
				\\
				\hline
				$\alpha=0.1$ & 1.000& 1.000&1.000&1.000&1.000&1.000&1.000&1.000
				\\
				$\alpha=0.5$ &1.000&1.000&0.416&0.078&0.052 &0.051& 0.048 &0.049
				\\
				$\alpha=1.5$ &1.000&0.123& 0.049&0.048 &0.051&0.052&0.047&0.051 
				\\
				$\alpha=5$ &1.000&0.932& 0.059& 0.049 & 0.048  & 0.048 &0.049& 0.050 
				\\
			\end{tabular}
		\end{center}
		\caption{Rejection rate for a $\chi^2$ goodness-of-fit  test for uniformity of $(X_n,\ldots,X_{n+2})$ in a beta-distribution for various values of $\alpha$. }
	\end{table}    
	
\end{example}

\br In conclusion, Examples~\ref{ex:1} and \ref{ex:2} demonstrate that the answer to the title of our paper (`How many digits are needed?') of course depend much on $q$ (in Example~\ref{ex:1}, the higher $q$ is, the fewer digits are needed) and on how much $f$ deviates from the uniform PDF on $[0,1)$ (in Example~\ref{ex:2}, the more skew $f$ is, the more digits are needed). Moreover, as this deviation increases or the sample size decreases, the $\chi^2$ goodness-of-fit test as used in the examples becomes less powerful; alternative tests are discussed in \cite{Morrow}.
\er 

\section{The multivariate case}\label{s:multi}

Theorem \ref{t:1} extends as follows. For a given positive integer $k$, 
let now $X=(X_1,...,X_k)$ be a $k$-dimensional random variable with values in the unit cube $[0,1)^k$ so that its CDF $F(x_1,...,x_n)=\mathrm P(X_1\le x_1,...,X_k\le x_k)$ is absolutely continuous, and denote its multivariate PDF by $f$. Extend the function $T$ to be a function $T: [0,1)^k\mapsto[0,1)^k$ so that $T(x_1,...,x_k)=(T(x_1),...,T(x_k))$.  For $n=1,2,...$, denote the multivariate CDF of $T^n(X)$ by $F_n$.
For a real Lebesgue integrable function $g$ defined on $(0,1)^k$, let $\|g\|_1=\int_{(0,1)^k}|g(t)|\,\mathrm dt$ 
and let $L_1((0,1)^k)$ be the set of such functions $g$ (i.e., $\|g\|_1<\infty$).
For a real $k$-dimensional function $g=(g_1,...,g_k)$ defined on $(0,1)^k$, let 
$\|g\|_\infty=\sup_{x\in(0,1)^k}\sqrt{g_1(x)^2+...+g_k(x)^2}$. Define the set $\bar L_1((0,1)^k)=\{g \in L^1((0,1)^k)\,|\, \int_{(0,1)^k}g(t)\,\mathrm dt =1\}$  and $\bar L_1'((0,1)^k)$ as its subset of differentiable functions $g$ with gradient 
\[\nabla g(x_1,\ldots,x_k)=\left(\frac{\partial g}{\partial x_1}(x_1,\ldots,x_k),\ldots,\frac{\partial g}{\partial x_k}(x_1,\ldots,x_k) \right)\quad \]
such that $\|\nabla g\|_{\infty}<\infty$. Thus, for $k=1$, $\bar L_1'((0,1)^k)=\bar L_1'(0,1)$ as used in Theorem~\ref{t:1}.
For $g\in \bar L_1'((0,1)^k)$, $n\in\mathbb N$, $j: = (j_1,...,j_k)\in\{0,1,...,q-1\}^k$, and $x=(x_1,...,x_k)\in(0,1)^k$, define  $\nabla g_{n,j}(x)=\nabla g(x)$ if $q^{-n}j_i<x_i<q^{-n}(j_i+1)$ for $i=1,...,k$ and $\nabla g_{n,j}(x)=0$ otherwise, 
and define
$$F_g(x) = \int_0^{x_1}\cdots\int_0^{x_k} g(t_1,...,t_k)\,\mathrm dt_1\cdots\,\mathrm dt_k.$$ 
For notational convenience, we can consider $F$ and $F_n$ to be functions defined on $(0,1)^k$, so $F=F_f$.
Let $e=(1,\cdots,1)$, that is $x$ with each component equal to $1$, and as a short hand notation write $\sum_{j=0}^{(q^n-1)e}...$ for $\sum_{j_1=0}^{q^n-1}\cdots\sum_{j_k=0}^{q^n-1}...$, and for a real function $g$ defined on $(0,1)^k$, $n=1,2,...$, and $x\in(0,1)^k$, let
\[ g_n(x) = q^{-nk}\sum_{j=0}^{(q^n-1)e} g(q^{-n}(j+x)).\]
Then, as in \eqref{e:5555}, we see that $F_n$ is absolutely continuous with PDF $f_n$.
Finally, let $P_n$ be the probability distribution with CDF $F_n$, $\mu$ Lebesgue measure on $[0,1)^k$, and 
$d_{\mathrm{TV}}(P_n,\mu)$ 
the total variation distance between these measure
(where \eqref{e:TV2} extends to the multivariate case with obvious modifications).


\bt \label{t:2} If $g \in \bar L_1'((0,1)^k)$ then
\begin{align}\label{e:uniform1k}
d_{\mathrm{TV}}(P_n,\mu)
&\le \frac12\|f-g\|_1+\frac{1}{2}{\sqrt{\frac{k}{3}}} q^{-n(k+1)} \sum_{j=0}^{(q^n-1)e}\|\nabla g_{n,j}\|_\infty\\
&\le \frac12\|f-g\|_1+ \frac{1}{2}{\sqrt{\frac{k}{3}}} q^{-n} \|\nabla g\|_\infty.
\end{align}
In particular, 
\begin{equation*}
\lim_{n\rightarrow\infty} d_{\mathrm{TV}}(P_n,\mu)
= 0.
\end{equation*}
Furthermore,  if $f \in \bar L_1'((0,1)^k)$ then $P_n$ converges exponentially fast:
\begin{equation*}
d_{\mathrm{TV}}(P_n,\mu)
\le \frac12\sqrt{\frac{k}{3}}q^{-n(k+1)} \sum_{j=0}^{(q^n-1)e}\|\nabla f_{n,j}\|_\infty
\le \frac{1}{2}{\sqrt{\frac{k}{3}}}  q^{-n}
\|\nabla f\|_\infty.
\end{equation*}
Finally, if $\|f\|_\infty < \infty $ and $f$ is continuous except for finitely many points, then 
	\begin{equation} \label{e:uniform2k}
	|f_n(x)-1| \to 0 \quad \mbox{uniformly for $x\in(0,1)^k$}.
	\end{equation}
\et

\begin{proof} 
	Let $x\in(0,1)^k$ and $g \in \bar L_1'((0,1)^k)$.
	As in (\ref{dens_diff}),
	\begin{align*} 
	g_n(x) - 1 &= \sum_{j=0} ^{(q^n-1)e}\int_{q^{-n}j_1}^{q^{-n(j_1+1)}}\cdots \int_{q^{-n}j_k}^{q^{-n(j_k+1)}} [g(q^{-n}(j+x)) - g(t)]\, \mathrm d t \\
	&= \sum_{j=0}^{(q^n-1)e} \int_{(0,1)^k} q^{-nk} [g(q^{-n}(j+x)) - g(q^{-n}(j+y))]\, \mathrm d y .
	\end{align*}
	By the mean value theorem, 
	$$ |g(q^{-n}(j+x)) - g(q^{-n}(j+y))| \le \|\nabla g_{n,j}\|_\infty q^{-n}\|x-y\|$$
	where $\|\cdot\|$ is usual Euclidean distance.
	We estimate 
	$$\int_{(0,1)^k}\|x-y\|\,\mathrm dy \le \left(\int_{(0,1)^k}\|x-y\|^2\,\mathrm dy\right)^{1/2} \le \frac{\sqrt{k}}{3}$$ 
	which yields the bound
	$$|g_n(x) - 1| \leq q^{-n(k+1)}\frac{\sqrt{k}}{3}\sum_{j=0}^{(q^n-1)e}\|\nabla g_{n,j}\|_\infty \le q^{-n} \frac{\sqrt{k}}{3} \|\nabla g\|_\infty.$$
	As in (\ref{fn-gn}) we have
	$$ \int_0^1 \left|f_n(x) - g_n(x)\right|\,\mathrm dx \le \|f-g\|_1.$$  
	Combining the last two estimates gives
	\begin{align*}
	2d_{\mathrm{TV}}(P_n,\mu) 
	&\le \|f-g\|_1 + \sqrt{\frac{k}{3}}q^{-n(k+1)} \sum_{j=0}^{(q^n-1)e}\|\nabla g_{n,j}\|_\infty \\
	&\le \|f-g\|_1 + \sqrt{\frac{k}{3}}q^{-n}\|\nabla g\|_\infty 
	\end{align*}
	whereby \eqref{e:uniform1k} follows.
	
	To show \eqref{e:uniform2k}, we assume for simplicity that $f$ only has one discontinuity at $x_0=(x_{0,1},\ldots,x_{0,k}) \in (0,1)^k$. The case of more than one discontinuity can be treated as in the proof of Theorem \ref{t:1}. Let $\delta>0$ and define
		\begin{align*}
		I_n&=\{(j_1,\ldots,j_k)\in \{0,1,\ldots,q^n-1\}^k\mid \exists i: j_i< \lfloor q^n\delta \rfloor  \vee j_i> \lceil q^n(1-\delta) \rceil \},\\
		J_n& =\{(j_1,\ldots,j_k)\in \{0,1,\ldots,q^n-1\}^k\mid \max_i |j_i-q^nx_{0,i}|<\lfloor q^n\delta \rfloor \}, \\
		K_n&=\{0,\ldots,q^n-1\}^k\backslash (I_n\cup J_n). 
		\end{align*}
		Then
		\begin{align} \nonumber
		f_n(x) - 1 &= \sum_{j\in I_n} \int_{(0,1)^k} q^{-nk} [f(q^{-n}(j+x)) - f(q^{-n}(j+y))]\, \mathrm d y \\ \nonumber
		&+ \sum_{j\in J_n} \int_{(0,1)^k} q^{-nk} [f(q^{-n}(j+x)) - f(q^{-n}(j+y))] \, \mathrm d y\\ \label{e:3terms}
		&+ \sum_{j\in K_n} \int_{(0,1)^k} q^{-nk} [f(q^{-n}(j+x)) - f(q^{-n}(j+y))]\, \mathrm d y.
		\end{align}
		If $\varepsilon>0$ is given, we can choose $\delta$ such that each term in \eqref{e:3terms} is less than $\varepsilon/3$. This follows  as in the proof of Theorem \ref{t:1} by noting that the cardinality of $I_n$ is at most $\delta q^{n(k-1)}$, the cardinality of $J_n$ is at most $  2\delta q^{n}$, and $f$ is bounded on $(0,1)^k$ and uniformly continuous on the closed set $[\delta/2,1-\delta/2]^k \backslash C(x_0)$ where $C(x_0)$ denotes the cube of sidelength $\delta$ centered at $x_0$.
\end{proof}

\section*{Acknowledgements} 
\noindent 
Supported by The Danish Council for Independent Research —
Natural Sciences, grant DFF – 10.46540/2032-00005B.

\end{document}